\theoremstyle{plain}
\numberwithin{equation}{section}
\newtheorem{theorem}{Theorem}[section]
\newtheorem{lemma}{Lemma}[section]
\theoremstyle{remark}
\theoremstyle{definition}
\newtheorem{definition}{Definition}
\begin{document}

\begin{center}
	\textbf{\Large{Gradient of eigenvalues of Dirac operators \\ and its applications}}
	
	\vskip 7pt
	
	Tigran Harutyunyan and Yuri Ashrafyan
	% T.N. Harutyunyan and Yu.A. Ashrafyan
	
	\vskip 7pt
	
	Department of Mathematics and Mechanics, Yerevan State University, Yerevan, Armenia
\end{center}

\begin{abstract}
For Dirac operators, which have discrete spectra, the concept of eigenvalues' gradient is given and  formulae for this gradients are obtained in terms of normalized eigenfunctions. 
It's shown how the gradient is being used to describe isospectral operators or when finite number of spectral data is changed.

\vskip 7pt
\textbf{\textit{Keywords}:} \   Dirac operator, Gradient of eigenvalue, Isospectral operators
\end{abstract}

\parindent=1cm

\section{\large{Introduction. Gradient of eigenvalues.}}\label{sec1}
Let $E$ is two dimensional identical matrix, and 
$\sigma_1=\left(
                                                             \begin{array}{cc}
                                                               0 & i \\
                                                               -i & 0 \\
                                                             \end{array}
                                                           \right)
$,
$\sigma_2=\left(
                                                             \begin{array}{cc}
                                                               1 & 0 \\
                                                               0 & -1 \\
                                                             \end{array}
                                                           \right)
$,
$\sigma_3=\left(
                                                             \begin{array}{cc}
                                                               0 & 1 \\
                                                               1 & 0 \\
                                                             \end{array}
                                                           \right)
$
are well-known Pauli matrices, which have properties $\sigma_k^2 = E$, $\sigma_k^* = \sigma_k$ (self-adjointness) and $\sigma_k \sigma_j = - \sigma_j \sigma_k$ (anti-commutativity), when $k \neq j$, for $k,j = 1, 2, 3$.

Let $p$ and $q$ are real-valued, summable on $[0,\pi]$ functions, i.e. $p, q \in L^1_{\mathbb{R}}[0, \pi]$.
By $L(p, q, \alpha, \beta) = L(\Omega, \alpha, \beta)$ we denote the boundary-value problem for canonical Dirac system (see \cite{GaLe,LeSa,Ma1,GaDz}):
\begin{gather}
\quad \ell y\equiv \Big\{ B \cfrac{d}{dx} + \Omega(x) \Big\} y =\lambda y,
\quad x \in (0, \pi),
\quad y=\left(
                                                                     \begin{array}{c}
                                                                       y_1 \\
                                                                       y_2 \\
                                                                     \end{array}
                                                                   \right)
,\quad \lambda\in \mathbb{C},\label{eq1.1}\\
y_1(0)\cos\alpha+y_2(0)\sin\alpha=0,\quad \alpha\in \Big(-\cfrac{\pi}{2}, \cfrac{\pi}{2} \Big],\label{eq1.2}\\
y_1(\pi)\cos\beta+y_2(\pi)\sin\beta=0,\quad \beta\in \Big(-\cfrac{\pi}{2}, \cfrac{\pi}{2} \Big],\label{eq1.3}
\end{gather}
where
$B = \cfrac{1}{i} \sigma_1 
= \left(
     \begin{array}{cc}
       0 & 1 \\
       -1 & 0 \\
     \end{array}
   \right),
$
$ \ \Omega(x) = \sigma_2 p(x) + \sigma_3 q(x) 
= \left(
             \begin{array}{cc}
               p(x) & q(x) \\
               q(x) & -p(x) \\
             \end{array}
           \right).
$

By the same $L(p, q, \alpha, \beta)$ we also denote a self-adjoint operator, generated by differential expression $\ell$ in Hilbert space of two component vector-functions $L^2([0, \pi]; {\mathbb{C}}^2)$ on the domain
\begin{equation}\label{eq1.4}
\begin{aligned}
D =  \Big\{ y=\left(
\begin{array}{c}
y_1 \\
y_2 \\
\end{array}
\right)
; y_k \in AC [ 0, \pi ], (\ell y)_k \in L^2 [ 0, \pi ], k = 1, 2;\\
 y_1(0) \cos \alpha + y_2(0) \sin \alpha = 0, \ y_1(\pi) \cos \beta + y_2(\pi) \sin \beta = 0 \Big\}
\end{aligned}
\end{equation}
where $AC [ 0, \pi ]$ is the set of absolutely continuous functions on $[0, \pi]$ (see, e.g. \cite{LeSa,Na}).
The scalar product in $L^2([a,b]; \mathbb{C}^2)$ we denote by $(f,g) = \int_{a}^{b} \langle f,g \rangle dx = \int_{a}^{b} [f_1(x) \bar{g}_1(x) + f_2(x) \bar{g}_2(x)] dx$.
It is well known (see \cite{GaDz,HaAz,AlHrMy}) that under these conditions the spectra of the operator $L(p, q, \alpha)$ is purely discrete and consists of simple, real eigenvalues, which we denote by $\lambda_n= \lambda_n (p, q, \alpha, \beta)= \lambda_n (\Omega, \alpha, \beta)$, $n \in \mathbb{Z}$, to emphasize the dependence of $\lambda_n$ on quantities $p, q$ and $\alpha, \beta$.
It is also well known (see, e.g. \cite{GaDz,HaAz,AlHrMy}) that the eigenvalues form a sequence, unbounded below as well as above.
So we will enumerate it as $\lambda_k < \lambda_{k+1}, k \in \mathbb{Z}$, $\lambda_k>0$, when $k>0$ and $\lambda_k<0$, when $k<0$, and the nearest to zero eigenvalue we will denote by $\lambda_0$.
If there are two nearest to zero eigenvalue, then by $\lambda_0$ we will denote the negative one.
With this enumeration it is proved (see \cite{GaDz,HaAz,AlHrMy}), that the eigenvalues have the asymptotics:

\begin{equation}{\label{eq1.5}}
\lambda_n(\Omega, \alpha, \beta)= n - \cfrac{\beta - \alpha}{\pi} + r_n, \quad r_n = o(1), \quad n \rightarrow \pm \infty.
\end{equation}

Let $y(x, \lambda) = \varphi(x,\lambda,\alpha, \Omega)$ and $y (x, \lambda) = \psi(x,\lambda, \beta, \Omega)$ are the solutions of the Cauchy problems

\begin{equation}\label{eq1.6}
\Bigg \{ \begin{array}{c}
\ell y = \lambda y \\
 y (0, \lambda) = \left(
\begin{array}{c}
\sin\alpha \\
-\cos\alpha \\
\end{array}
\right)
\end{array},
\end{equation}

\begin{equation}\label{eq1.7}
\Bigg \{ \begin{array}{c}
\ell y = \lambda y \\
y (\pi, \lambda) = \left(
\begin{array}{c}
\sin\beta \\
-\cos\beta \\
\end{array}
\right)
\end{array},
\end{equation}
respectively.
Since the differential expression $\ell$ is self-adjoint, the components $\varphi_1(x,\lambda)$, $\varphi_2(x,\lambda)$ and $\psi_1(x,\lambda)$, $\psi_2(x,\lambda)$ of the vector-functions $\varphi(x,\lambda)$ and $\psi(x,\lambda)$ can be chosen real-valued for real $\lambda$.
It is easy to see, that $\varphi_n(x,\Omega)=\varphi(x,\lambda_n,\alpha, \Omega)$ and $\psi_n(x,\Omega)=\psi(x,\lambda_n,\beta, \Omega)$ are the eigenfunctions, corresponding to the eigenvalue $\lambda_n$.
By $a_n=a_n(\Omega,\alpha, \beta)$ and $b_n=b_n(\Omega,\alpha, \beta)$ we denote the squares of the $L^2$-norm of the eigenfunctions $\varphi_n(x,\Omega)$ and $\psi_n(x,\Omega)$:

\[
a_n=\|\varphi_n\|^2 = \displaystyle \int_0^{\pi} |\varphi_n (x, \Omega)|^2 dx,\quad    n \in \mathbb{Z},
\]
\[
b_n=\|\psi_n\|^2 = \displaystyle \int_0^{\pi} |\psi_n (x, \Omega)|^2 dx,\quad    n \in \mathbb{Z}.
\]
The numbers $a_n$ and $b_n$ are called norming constants.
By $h_n(x,\Omega)$ we will denote normalized eigenfunctions (i.e. $\|h_n(x)\| = 1$) of operator $L(\Omega, \alpha, \beta)$:
\[
h_n(x) = h_n(x, \Omega) = \cfrac{\varphi_n(x, \Omega)}{\sqrt{a_n(\Omega,\alpha)}},
\]
and it can be taken also as
\[
\hat{h}_n(x) = \hat{h}_n(x, \Omega) = \cfrac{\psi_n(x, \Omega)}{\sqrt{b_n(\Omega,\beta)}}.
\]
It is easy to see, that $|h_n(0)|^2 = \cfrac{1}{a_n}$ and $|\hat{h}_n(\pi)|^2 = \cfrac{1}{b_n}$.
Having a goal to describe the dependence of $\lambda_n$ on quantities $p, q$ and $\alpha, \beta$ more precisely, we input a concept of eigenvalues' gradient, by the following formula (compare with \cite{IsTr})

\begin{equation}\label{eq1.8}
grad \lambda_n = \left( \cfrac{\partial \lambda_n}{\partial \alpha}, \cfrac{\partial \lambda_n}{\partial \beta},
\cfrac{\partial \lambda_n}{\partial p(x)}, \cfrac{\partial \lambda_n}{\partial q(x)} \right).
\end{equation}

\begin{definition}\label{def1.1}
	Let $g$ is defined on $\left( a, b \right)$, where $-\infty\le a < b \le \infty$.
	The derivative of function $f$ with respect to function $g$ is called a function $\cfrac{\partial f}{\partial g(x)}$, which satisfies the equation
	\[
	\cfrac{d}{d \epsilon}f(g + \epsilon v)\Big|_{\epsilon = 0} = \displaystyle \int_{a}^{b} \cfrac{\partial f}{\partial g(x)}v(x) dx,
	\]
	for all $v \in L^2_{\mathbb{R}}\left( a, b \right)$.
\end{definition}

We want to express the components of the eigenvalues' gradient by normalized eigenfunctions of $L(p,q,\alpha, \beta)$ problem.

\begin{theorem}\label{thm1.1}
  Let $\lambda_n$ and $h_n$ are eigenvalues and normalized eigenfunctions of the $L(p,q,\alpha, \beta)$ problem correspondingly.
  Then there hold the relations:
\begin{gather*}
  \cfrac{\partial \lambda_n(\alpha, \beta, p, q)}{\partial \alpha} = - |h_n(0)|^2, \\
  \cfrac{\partial \lambda_n(\alpha, \beta, p, q)}{\partial \beta} = |h_n(\pi)|^2, \\
  \cfrac{\partial \lambda_n(\alpha, \beta, p, q)}{\partial p(x)} = |h_{n_1}(x)|^2 - |h_{n_2}(x)|^2, \\
  \cfrac{\partial \lambda_n(\alpha, \beta, p, q)}{\partial q(x)} = 2 h_{n_1}(x) \cdot h_{n_2}(x).
\end{gather*}
\end{theorem}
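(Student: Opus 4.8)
I would treat all four components through a single device: differentiate the identity $\ell h_n=\lambda_n h_n$ with respect to the relevant parameter, pair the result with $h_n$ in $L^2$, and exploit the Lagrange (Green) identity for $\ell$. A direct integration by parts, using that $\Omega$ is real and symmetric while $B$ is skew-symmetric, gives for real-valued $u,v$ (the eigenfunctions and their parameter-derivatives being real-valued)
\[
(\ell u,v)-(u,\ell v)=\big[\,u_2 v_1-u_1 v_2\,\big]_0^{\pi},
\]
and this boundary form is the only quantity that can survive the pairing. The key structural point is the dichotomy between the two kinds of parameters: $p$ and $q$ enter the differential expression $\ell$ but leave the domain $D$ untouched, whereas $\alpha$ and $\beta$ leave $\ell$ unchanged but deform the boundary conditions defining $D$.

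First I would settle the derivatives in $p$ and $q$. Replacing $p$ by $p+\epsilon v$ perturbs $\ell$ by multiplication by $\epsilon\,v(x)\sigma_2$ while keeping the boundary conditions fixed; hence both $h_n$ and $\partial_\epsilon h_n$ obey the same conditions at $0$ and at $\pi$, so at each endpoint the two $2$-vectors are parallel and the boundary form vanishes at both ends. Differentiating $\ell_\epsilon h_n=\lambda_n h_n$ at $\epsilon=0$, pairing with $h_n$, and using $(\ell\,\partial_\epsilon h_n,h_n)=(\partial_\epsilon h_n,\ell h_n)=\lambda_n(\partial_\epsilon h_n,h_n)$ collapses everything to the Feynman–Hellmann relation $\frac{d}{d\epsilon}\lambda_n\big|_{\epsilon=0}=\int_0^{\pi}v(x)\,\langle\sigma_2 h_n,h_n\rangle\,dx$. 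Since $\langle\sigma_2 h_n,h_n\rangle=|h_{n_1}|^2-|h_{n_2}|^2$, and for the $q$-perturbation (with $\sigma_2$ replaced by $\sigma_3$) $\langle\sigma_3 h_n,h_n\rangle=2h_{n_1}h_{n_2}$, Definition~\ref{def1.1} immediately yields the two interior formulae.

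The boundary parameters are handled by the same pairing, except that now the boundary form does not fully vanish. Differentiating $\ell h_n=\lambda_n h_n$ in $\alpha$ and pairing with the normalized $h_n$, the interior terms cancel exactly as before, leaving $\frac{\partial\lambda_n}{\partial\alpha}=\big[\partial_\alpha h_{n,2}\,h_{n,1}-\partial_\alpha h_{n,1}\,h_{n,2}\big]_0^{\pi}$. At $x=\pi$ the condition \eqref{eq1.3} is independent of $\alpha$, so $h_n(\pi)$ and $\partial_\alpha h_n(\pi)$ are both orthogonal to $(\cos\beta,\sin\beta)$, hence proportional, and the form vanishes there. At $x=0$ I would insert the explicit value $h_n(0)=a_n^{-1/2}(\sin\alpha,-\cos\alpha)$ coming from \eqref{eq1.6}; differentiating this in $\alpha$ and substituting, the terms containing $\frac{d}{d\alpha}a_n^{-1/2}$ cancel and the $x=0$ boundary form reduces to $a_n^{-1}$, which, entering with the $-(\cdot)|_0$ orientation, gives $\frac{\partial\lambda_n}{\partial\alpha}=-a_n^{-1}=-|h_n(0)|^2$. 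The computation for $\beta$ is identical after replacing $\varphi_n,h_n$ by $\psi_n,\hat h_n$ and using \eqref{eq1.7}: the surviving endpoint is now $x=\pi$, which enters with the $+$ orientation and produces $+|h_n(\pi)|^2$; the sign difference from the $\alpha$ case is precisely the $(\cdot)|_{\pi}-(\cdot)|_0$ orientation of the boundary form.

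The main obstacle is not the algebra but the analytic groundwork that legitimizes it: one must know that the simple eigenvalue $\lambda_n$ and a corresponding normalized eigenfunction $h_n$ depend differentiably on $\alpha,\beta$ and on the $L^1$-data $p,q$, so that $\partial_\epsilon h_n$, $\partial_\alpha h_n$, $\partial_\beta h_n$ exist as $L^2$-functions satisfying the differentiated equation together with the differentiated boundary conditions. This follows from the analytic dependence of the solutions of the Cauchy problems \eqref{eq1.6}–\eqref{eq1.7} on their parameters, combined with the simplicity of the spectrum, and it is the only place where genuine care is needed; once it is in hand, every remaining step is a cancellation forced by self-adjointness and by the geometry of the boundary conditions.
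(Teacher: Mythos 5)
Your proposal is correct, and every algebraic ingredient in it coincides with the paper's: the Lagrange identity with boundary Wronskian $\big[u_2v_1-u_1v_2\big]_0^{\pi}$, its vanishing at an endpoint where both vectors satisfy one and the same boundary condition (hence are parallel), the explicit evaluation at $x=0$ via $h_n(0)=a_n^{-1/2}(\sin\alpha,-\cos\alpha)$ with the cancellation of the $\frac{d}{d\alpha}a_n^{-1/2}$ terms, and the quadratic forms $\langle\sigma_2h_n,h_n\rangle=h_{n_1}^2-h_{n_2}^2$, $\langle\sigma_3h_n,h_n\rangle=2h_{n_1}h_{n_2}$ for the interior derivatives. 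Where you genuinely diverge from the paper is in the organization of the limit argument: you differentiate the eigenvalue equation in the parameter and run a Feynman--Hellmann pairing, which obliges you to presuppose that $h_n$ (including the normalizing factor $a_n^{-1/2}$) is differentiable in $\alpha$, $\beta$ and in the direction $v$ --- exactly the ``analytic groundwork'' you flag at the end. The paper avoids that lemma entirely by working with finite differences of two \emph{exact} eigenfunctions: it writes the eigenvalue equations \eqref{eq1.9}--\eqref{eq1.10} for the unperturbed problem and for $L(p,q,\alpha+\Delta\alpha,\beta)$ (resp.\ $L(p+\epsilon v,q,\alpha,\beta)$), pairs and subtracts to obtain the exact identity $-\sin\Delta\alpha\,/\sqrt{a_n(\alpha)a_n(\alpha+\Delta\alpha)}=\big[\lambda_n(\alpha+\Delta\alpha)-\lambda_n(\alpha)\big]\big(h_n,\tilde h_n\big)$ (and its analogue in $\epsilon$), and then passes to the limit using only the \emph{continuity} $\tilde h_n\to h_n$, so that $\big(h_n,\tilde h_n\big)\to 1$; the existence of $\partial\lambda_n/\partial\alpha$ then falls out of the identity itself rather than being assumed. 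So your route buys a shorter, one-line differentiation at the price of a heavier perturbation-theory prerequisite, while the paper's difference-quotient version gets the same boundary-Wronskian computation with strictly weaker analytic input; your sketched justification (analyticity of the Cauchy solutions \eqref{eq1.6}--\eqref{eq1.7} in the parameters plus simplicity of the eigenvalues, hence the implicit function theorem for the characteristic function) is indeed the standard way to discharge your extra hypothesis, and your reduction of the $\beta$ case through $\hat h_n=\pm h_n$ and $|\hat h_n(\pi)|^2=1/b_n$ matches the paper's use of $\psi_n$ and $b_n$.
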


\begin{proof}
  Let $h_n$ is eigenfunction of problem $L(p,q,\alpha, \beta)$, and $\tilde{h}_n$ is eigenfunction of problem $L(p,q,\alpha + \Delta \alpha, \beta)$.
  Then

\begin{equation}\label{eq1.9}
	\ell h_n \equiv B h'_n(x)  + \Omega(x) h_n(x) \equiv \lambda_n(\alpha) h_n(x),
\end{equation} 
\[
h_{n_1}(0)\cos\alpha + h_{n_2}(0)\sin\alpha = 0,
\]
\[
h_{n_1}(\pi)\cos\beta + h_{n_2}(\pi)\sin\beta = 0.
\]

\begin{equation}\label{eq1.10}
\ell \tilde{h}_n \equiv B \tilde{h}'_n(x)  + \Omega(x) h_n(x) \equiv \lambda_n(\alpha + \Delta \alpha) \tilde{h}_n(x),
\end{equation}
\[
\tilde{h}_{n_1}(0)\cos (\alpha + \Delta \alpha) + \tilde{h}_{n_2}(0)\sin (\alpha + \Delta \alpha) = 0,
\]
\[
\tilde{h}_{n_1}(\pi)\cos\beta + \tilde{h}_{n_2}(\pi)\sin\beta = 0.
\]

Multiply \eqref{eq1.9} by $\tilde{h}_n(x)$ scalarly from the right, and \eqref{eq1.10} by $h_n(x)$ from the left.
Taking into account the self-adjointness of $\Omega(x)$ $\left( (h_n, \Omega \tilde{h}_n) = (\Omega h_n, \tilde{h}_n)  \right) $, we obtain

\begin{gather*}
  \Big( B h'_n, \tilde{h}_n \Big) + \Big( \Omega h_n, \tilde{h}_n \Big) = \lambda_n(\alpha) \Big( h_n, \tilde{h}_n \Big),\\
  \Big( h_n, B \tilde{h}'_n \Big) + \Big( \Omega h_n, \tilde{h}_n \Big) = \lambda_n(\alpha + \Delta \alpha) \Big( h_n, \tilde{h}_n \Big).
\end{gather*}
Subtracting from the second equation the first equation, we will get
\[
\int_{0}^{\pi} \Big \langle \left( \begin{array}{c}
                                                 h_{n_1} \\
                                                 h_{n_2}
                                         \end{array} \right),
                                         \left( \begin{array}{c}
                                                 \tilde{h}'_{n_2} \\
                                                 -\tilde{h}'_{n_1}
                                         \end{array} \right)
                                         \Big \rangle
                                         dx
                                         -
\int_{0}^{\pi} \Big \langle \left( \begin{array}{c}
                                                 h'_{n_2} \\
                                                 -h'_{n_1}
                                         \end{array} \right),
                                         \left( \begin{array}{c}
                                                 \tilde{h}_{n_1} \\
                                                 \tilde{h}_{n_2}
                                         \end{array} \right)
                                         \Big \rangle
                                         dx
                                         =
\]
\begin{equation}\label{eq1.11}
\qquad \qquad \qquad \qquad \qquad = \left[\lambda_n(\alpha + \Delta \alpha) - \lambda_n(\alpha) \right] \Big( h_n, \tilde{h}_n \Big).
\end{equation}
Taking into account, that in case of real potentials the components of the solutions can be taken real, thus the left side of the latter equation can be written as
\begin{gather*}
  \int_{0}^{\pi} \Big[ h_{n_1}(x) \tilde{h}'_{n_2}(x) - h_{n_2}(x) \tilde{h}'_{n_1}(x) - h'_{n_2}(x) \tilde{h}_{n_1}(x) + h'_{n_1}(x) \tilde{h}_{n_2}(x) \Big] dx = \\
 = \int_{0}^{\pi} \cfrac{d}{dx} \Big[ h_{n_1}(x) \tilde{h}_{n_2}(x) - h_{n_2}(x) \tilde{h}_{n_1}(x) \Big] dx = \\
 = h_{n_1}(\pi) \tilde{h}_{n_2}(\pi) - h_{n_2}(\pi) \tilde{h}_{n_1}(\pi) - h_{n_1}(0) \tilde{h}_{n_2}(0) + h_{n_2}(0) \tilde{h}_{n_1}(0).
\end{gather*}

Since $h_n(x) = \cfrac{\varphi_n(x, \alpha)}{\sqrt{a_n(\alpha)}}$ \ and \
$\tilde{h}_n(x) = \cfrac{\varphi_n(x, \alpha + \Delta \alpha)}{\sqrt{a_n(\alpha + \Delta \alpha)}}$,
then $h_n(0) = \cfrac{1}{\sqrt{a_n(\alpha)}}\left( \begin{array}{c}
                                             \sin \alpha \\
                                             -\cos \alpha
                                           \end{array}\right)$
\ and \
$\tilde{h}_n(0) = \cfrac{1}{\sqrt{a_n(\alpha + \Delta \alpha)}} \left(\begin{array}{c}
                                             \sin (\alpha + \Delta \alpha) \\
                                             -\cos (\alpha + \Delta \alpha)
                                           \end{array}\right)$.
Thus the equation \eqref{eq1.11} can be rewritten as follows
\[
-\cfrac{1}{\sqrt{a_n(\alpha) a_n(\alpha +\Delta\alpha)}} \sin\Delta\alpha =
\left[\lambda_n(\alpha + \Delta\alpha) - \lambda_n(\alpha) \right] \Big( h_n, \tilde{h}_n \Big).
\]
From the latter, when $\Delta\alpha \rightarrow 0$, we obtain
\begin{equation}\label{eq1.12}
  \cfrac{\partial \lambda_n}{\partial \alpha} = -\cfrac{1}{a_n} = -|h_n(0)|^2.
\end{equation}

Similarly we obtain
\begin{equation}\label{eq1.13}
  \cfrac{\partial \lambda_n}{\partial \beta} = \cfrac{1}{b_n} = |h_n(\pi)|^2.
\end{equation}

To obtain the equality $ \cfrac{\partial \lambda_n}{\partial p(x)} = |h_{n_1}(x)|^2 - |h_{n_2}(x)|^2$, we write \eqref{eq1.9} in the form
\begin{equation}\label{eq1.14}
  B h'_n(x)  + \left( \sigma_2 p(x) + \sigma_3 q(x) \right) h_n(x) \equiv \lambda_n(p) h_n(x)
\end{equation}
and for \eqref{eq1.10} in the form
\begin{equation}\label{eq1.15}
  B \tilde{h}'_n(x)  + \left( \sigma_2 \left[ p(x) + \epsilon v(x) \right] + \sigma_3 q(x) \right) \tilde{h}_n(x) \equiv \lambda_n(p+\epsilon v) \tilde{h}_n(x),
\end{equation}
where $\tilde{h}_n$ is normalized eigenfunction of the $L(p+\epsilon v, q, \alpha, \beta)$ problem.
Multiply \eqref{eq1.14} by $\tilde{h}_n(x)$ scalarly from the right, and \eqref{eq1.15} by $h_n(x)$ from the left.
Taking into account, that $h_n$ and $\tilde{h}_n$ satisfy to the same boundary conditions, subtract  equality \eqref{eq1.14} from  \eqref{eq1.15}, we obtain
\[
\Big( h_n, \sigma_2 \left[ p(x) + \epsilon v(x) \right] \tilde{h}_n \Big) - \Big( \sigma_2 p(x) h_n, \tilde{h}_n  \Big) =
\left[\lambda_n(p+\epsilon v) - \lambda_n(p) \right] \Big( h_n, \tilde{h}_n \Big).
\]
Form the latter it follows
\[
\cfrac{\lambda_n(p+\epsilon v) - \lambda_n(p)}{\epsilon} \Big( h_n, \tilde{h}_n \Big) = \int_{0}^{\pi} \Big( h_{n_1}(x) \tilde{h}_{n_1}(x) - h_{n_2}(x) \tilde{h}_{n_2}(x) \Big) v(x) dx.
\]
Tending $\epsilon \rightarrow 0$, using the fact, that $\tilde{h}_n \rightarrow h_n$, when  $\epsilon \rightarrow 0$ and the definition \ref{def1.1}, we obtain $ \cfrac{\partial \lambda_n}{\partial p(x)} = |h_{n_1}(x)|^2 - |h_{n_2}(x)|^2$.

Similarly we can obtain the equality $\cfrac{\partial \lambda_n}{\partial q(x)} = 2 h_{n_1}(x) \cdot h_{n_2}(x)$.

Theorem \ref{thm1.1} is proved.
\end{proof}

Let us consider also canonical Dirac system on half axis.
Let $p$ and $q$ are real-valued, local summable on $(0,\infty)$ functions, i.e. $p, q \in L^1_{\mathbb{R},loc}(0, \infty)$.
For $\alpha \in \big( -\cfrac{\pi}{2}, \cfrac{\pi}{2} \big]$, by $L(p, q, \alpha)$ we denote the self-adjoint operator, generated by differential expression $\ell$ \ (see \eqref{eq1.1}) in Hilbert space
of two component vector-functions $L^2(( 0, \infty); {\mathbb{C}}^2)$ on the domain
\begin{equation*}
\begin{aligned}
D_{\alpha} =  \Big\{ y=\left(
\begin{array}{c}
y_1 \\
y_2 \\
\end{array}
\right)
; y_k \in L^2 (0, \infty) \cap AC (0, \infty);\\
(\ell y)_k \in L^2 ( 0, \infty), k = 1, 2; \ y_1(0) \cos \alpha + y_2(0) \sin \alpha = 0 \Big\}
\end{aligned}
\end{equation*}
where $AC (0, \infty)$ is the set of functions, which are absolutely continuous on each finite segment
$[a, b] \subset (0, \infty), 0 < a < b < \infty$.
We assume, that the spectra of this operator is pure discrete (see, e.g. \cite{Mar, AshHar1}), and consists of simple eigenvalues, which we denote by $\lambda_n (p, q, \alpha)$, $n \in \mathbb{Z}$.

Let $y = \varphi(x, \lambda, \alpha, \Omega)$ is the same as in the case of finite interval, i.e. $\varphi$ is the solution of Cauchy problem \eqref{eq1.6}. 
Then $\varphi_n(x)=\varphi(x, \lambda_n)$ are the eigenfunctions, $a_n = \int_{0}^{\infty} |\varphi_n(x,\Omega)|^2 dx$, $n \in \mathbb{Z}$, are the norming constants, and $h_n(x) = h_n(x, \Omega, \lambda_n) = \cfrac{\varphi_n(x)}{\sqrt{a_n}}$ are the normalized eigenfunctions. 
In this case the gradient is defined as 
\begin{equation*}
	grad \lambda_n = \left( \cfrac{\partial \lambda_n}{\partial \alpha},\cfrac{\partial \lambda_n}{\partial p(x)}, \cfrac{\partial \lambda_n}{\partial q(x)} \right),
\end{equation*}
and in Definition \ref{def1.1} we take $a = 0, \ b = \infty$.

\begin{theorem}\label{thm1.2}
	Let $\lambda_n$ and $h_n$ are eigenvalues and normalized eigenfunctions of the $L(p,q,\alpha)$ problem correspondingly.
	Then there hold the relations:
	\begin{gather*}
	\cfrac{\partial \lambda_n(\alpha, p, q)}{\partial \alpha} = - |h_n(0)|^2, \\
	\cfrac{\partial \lambda_n(\alpha, p, q)}{\partial p(x)} = |h_{n_1}(x)|^2 - |h_{n_2}(x)|^2, \\
	\cfrac{\partial \lambda_n(\alpha, p, q)}{\partial q(x)} = 2 h_{n_1}(x) \cdot h_{n_2}(x).
	\end{gather*}
\end{theorem}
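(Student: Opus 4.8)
The plan is to repeat, essentially verbatim, the three computations in the proof of Theorem~\ref{thm1.1}, the only structural change being that the interval $[0,\pi]$ is replaced by $(0,\infty)$, the $\beta$-condition disappears, and every boundary contribution at the right endpoint becomes a limit as $x\to\infty$ instead of an evaluation at $x=\pi$. Accordingly, I would fix $n$, let $h_n$ be the normalized eigenfunction of $L(p,q,\alpha)$ with eigenvalue $\lambda_n(\alpha)$, and let $\tilde h_n$ be the normalized eigenfunction of the perturbed problem ($L(p,q,\alpha+\Delta\alpha)$ for the first formula, $L(p+\epsilon v,q,\alpha)$ and $L(p,q+\epsilon v,\alpha)$ for the other two). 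Since the spectrum is assumed discrete and simple, each eigenvalue is isolated, so $\tilde h_n\to h_n$ and the perturbed eigenvalue tends to $\lambda_n$ as the perturbation parameter tends to $0$; this is what licenses the passage to the limit at the end of each computation.

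For $\partial\lambda_n/\partial\alpha$ I would multiply the equation for $h_n$ scalarly by $\tilde h_n$ from the right and the equation for $\tilde h_n$ by $h_n$ from the left, subtract, and use the self-adjointness of $\Omega$ exactly as in \eqref{eq1.11}. The integrand on the left again collapses to the total derivative $\tfrac{d}{dx}\big[h_{n_1}\tilde h_{n_2}-h_{n_2}\tilde h_{n_1}\big]$, so over $(0,\infty)$ the left side equals
\[
\lim_{x\to\infty}\big[h_{n_1}(x)\tilde h_{n_2}(x)-h_{n_2}(x)\tilde h_{n_1}(x)\big]-\big[h_{n_1}(0)\tilde h_{n_2}(0)-h_{n_2}(0)\tilde h_{n_1}(0)\big].
\]
Provided the limit at infinity vanishes (see below), only the $x=0$ term survives; inserting $h_n(0)=a_n(\alpha)^{-1/2}(\sin\alpha,-\cos\alpha)^{\mathrm T}$ and the analogous expression for $\tilde h_n(0)$ reproduces exactly the finite-interval identity $-\,a_n(\alpha)^{-1/2}a_n(\alpha+\Delta\alpha)^{-1/2}\sin\Delta\alpha=[\lambda_n(\alpha+\Delta\alpha)-\lambda_n(\alpha)](h_n,\tilde h_n)$, and dividing by $\Delta\alpha$ and letting $\Delta\alpha\to0$ yields $\partial\lambda_n/\partial\alpha=-1/a_n=-|h_n(0)|^2$. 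For $\partial\lambda_n/\partial p(x)$ and $\partial\lambda_n/\partial q(x)$ I would run the computation \eqref{eq1.14}--\eqref{eq1.15} with $\sigma_2$ (respectively $\sigma_3$) carrying the perturbation. The $B$-terms combine into the same boundary expression, which now vanishes at both ends: at $x=0$ because $h_n(0)$ and $\tilde h_n(0)$ are both parallel to $(\sin\alpha,-\cos\alpha)^{\mathrm T}$ (the Wronskian of parallel vectors is zero), and at $x=\infty$ by the decay argument below. What remains is $\epsilon\int_0^\infty\langle\sigma_2 h_n,\tilde h_n\rangle\,v\,dx=[\lambda_n(p+\epsilon v)-\lambda_n(p)](h_n,\tilde h_n)$; dividing by $\epsilon$, letting $\epsilon\to0$ so that $\tilde h_n\to h_n$ and $(h_n,\tilde h_n)\to\|h_n\|^2=1$, and reading off $\langle\sigma_2 h_n,h_n\rangle=h_{n_1}^2-h_{n_2}^2$ and $\langle\sigma_3 h_n,h_n\rangle=2h_{n_1}h_{n_2}$ delivers the last two formulas through Definition~\ref{def1.1}.

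The one genuinely new point, and the step I expect to be the main obstacle, is justifying that the right-endpoint boundary term vanishes, i.e. that $W(x):=h_{n_1}(x)\tilde h_{n_2}(x)-h_{n_2}(x)\tilde h_{n_1}(x)\to0$ as $x\to\infty$. The cleanest route is to invoke the standard fact, available from \cite{Mar,AshHar1} in the limit-point/discrete-spectrum setting, that $L^2$-eigenfunctions of the half-line Dirac operator tend to $0$ at infinity; then $h_n(x),\tilde h_n(x)\to0$ and $W(x)\to0$ at once. Alternatively one argues directly: since $h_n,\tilde h_n\in L^2((0,\infty);\mathbb C^2)$, the Cauchy--Schwarz inequality gives $W\in L^1(0,\infty)$, and once one checks (from the eigenvalue equations and the integrability of the relevant products near infinity) that $W$ is absolutely continuous with integrable derivative, the limit $\lim_{x\to\infty}W(x)$ exists; an $L^1$-function possessing a limit at infinity must have that limit equal to $0$.

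A secondary, purely technical point is the passage to the limit in the perturbation parameter over the unbounded interval: one must ensure that the convergence $\tilde h_n\to h_n$ and the convergence of the accompanying integrals $\int_0^\infty\langle\sigma_2 h_n,\tilde h_n\rangle v\,dx\to\int_0^\infty(h_{n_1}^2-h_{n_2}^2)v\,dx$ are controlled uniformly near infinity. This is where the compactness used so freely on $[0,\pi]$ is unavailable, and it is the place where I would spend the most care; with the isolation of $\lambda_n$ (standard analytic perturbation theory for isolated simple eigenvalues) and the $L^2$-decay of eigenfunctions in hand, all three limits go through and the three formulas follow exactly as in the compact case.
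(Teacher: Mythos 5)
Your proposal is correct and follows essentially the same route as the paper's proof: repeat the computations of Theorem~\ref{thm1.1} on $(0,\infty)$, with the single new point being the vanishing of the boundary term at infinity, which both you and the paper extract from the fact that $h_n,\tilde{h}_n\in L^2(0,\infty)$ forces $\langle h_n,\tilde{h}_n\rangle\in L^1(0,\infty)$. The only (immaterial) difference is in how that $L^1$ fact is exploited: the paper merely picks a sequence $x_k\to\infty$ along which the product tends to $0$ and integrates up to $x_k$, while you prove the full limit $W(x)\to 0$ (or invoke pointwise decay of eigenfunctions), a slightly stronger statement than is actually needed.
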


\begin{proof}
	In case of real potentials the components of the solutions can be taken real.
	Since the eigenfunctions $h_n$ and $\tilde{h}_n$ are from $L^2(0, \infty)$, we can infer that the scalar products $\langle h_n, \tilde{h}_n\rangle$ are from $L^1(0, \infty)$ and, hence, are tending to $0$ on some
	$\{ x_k; \ x_k \rightarrow \infty, \ k \rightarrow \infty, \}$ sequence.
	Taking the latter first two formulae can be proved in the similar way as in theorem \ref{thm1.1}. 
	Thus here we will prove the third formula.
	
	Write the equation \eqref{eq1.9} in the following form
	\begin{equation}\label{eq1.16}
	B h'_n(x)  + \left( \sigma_2 p(x) + \sigma_3 q(x) \right) h_n(x) \equiv \lambda_n(p) h_n(x)
	\end{equation}
	and \eqref{eq1.10} in the form
	\begin{equation}\label{eq1.17}
	B \tilde{h}'_n(x)  + \left( \sigma_2 p(x) + \sigma_3 \left[ q(x) + \epsilon v(x) \right] \right) \tilde{h}_n(x) \equiv \lambda_n(p+\epsilon v) \tilde{h}_n(x),
	\end{equation}
	where $\tilde{h}_n$ is normalized eigenfunction of the $L(p, q + \epsilon v, \alpha, \beta)$ problem.
	Multiplying \eqref{eq1.16} scalarly by $\tilde{h}_n(x)$ from the right, and \eqref{eq1.17} by $h_n(x)$ from the left.
	Taking into account, that $h_n$ and $\tilde{h}_n$ satisfy to the same boundary conditions, subtract  equality \eqref{eq1.16} from  \eqref{eq1.17}, we obtain
	\[
	\Big( h_n, \sigma_3 \left[ q(x) + \epsilon v(x) \right] \tilde{h}_n \Big) - \Big( \sigma_3 q(x) h_n, \tilde{h}_n  \Big) =
	\left[\lambda_n(q+\epsilon v) - \lambda_n(q) \right] \Big( h_n, \tilde{h}_n \Big).
	\] 
	From the latter equation we have
	\begin{equation}\label{eq1.18}
	\begin{aligned}
	\int_{0}^{\infty} \Big( h_{n_1}(x) \bar{\tilde{h}}_{n_2}(x) + h_{n_2}(x) \bar{\tilde{h}}_{n_1}(x) \Big) \epsilon v(x)dx =
	\\
	= \left[\lambda_n(q + \epsilon v) - \lambda_n(q) \right] \Big( h_n, \tilde{h}_n \Big).
	\end{aligned}
	\end{equation}
	And from the equation \eqref{eq1.18} it follows
	\[
	\cfrac{\lambda_n(q+\epsilon v) - \lambda_n(q)}{\epsilon} \Big( h_n, \tilde{h}_n \Big) = \int_{0}^{\infty} \Big( h_{n_1} \bar{\tilde{h}}_{n_2} + h_{n_2} \bar{\tilde{h}}_{n_1} \Big) v(x) dx.
	\]
	Tending $\epsilon \rightarrow 0$, using the fact, that $\tilde{h}_n \rightarrow h_n$, when  $\epsilon \rightarrow 0$ and the definition \ref{def1.1}, we obtain $ \cfrac{\partial \lambda_n}{\partial q(x)} = 2 h_{n_1}(x) h_{n_2}(x)$.
	
	Theorem \ref{thm1.2} is proved.
\end{proof}

It is well-known, that the inverse problem of reconstruction of operator $L(p,q,\alpha, \beta)$ by spectral function (in our case by eigenvalues $\{\lambda_n\}_{n \in \mathbb{Z}}$ and norming constants $\{a_n\}_{n \in \mathbb{Z}}$) can not be solved uniquely, if we permit parameters $\alpha$ and $\beta$ to be arbitrary (see \cite{GaLe}). 
But if we fix one of them, then the inverse problem can be solved uniquely (see \cite{GaLe,AlHrMy,Ha2,Wa}). 
Therefore, usually is considered the problem $L(p,q,\alpha, 0)$ (see \cite{GaDz,AlHrMy,Ha2, AshHar2}).

It is also well-known, that for regular Dirac operators (the operators on finite interval with summable coefficients), we can not add or diminish the eigenvalues (because of obligatory asymptotics \eqref{eq1.5}), staying in the class of summable coefficients, but we can change the norming constants and describe the isospectral Dirac operators (see \cite{Ha2, AshHar2}). 

The applications of eigenvalues' gradient of describing operators, which isospectral with fixed operator $L(p, q, \alpha, 0)$ is given in section \ref{sec2}. 
On the other hand, if we consider Dirac operator on half axis $(0, \infty)$ (which has pure discrete spectra), we can add or diminish arbitrary finite number of eigenvalues or change norming constants, since in this case there are not obligatory asymptotics (see, e.g. \cite{AshHar1}).
The applications of eigenvalues' gradient in this case is given in section \ref{sec3}.

\section{\large{Isospectrality on finite interval.}}\label{sec2}

Let us consider the boundary-value problem $L(p, q, \alpha, 0)$ on $[0,\pi]$.
From the eigenvalues' asymptotics \eqref{eq1.5} it follows:

\begin{equation}{\label{eq2.1}}
\lambda_n(\Omega, \alpha, 0)= n - \cfrac{\alpha}{\pi} + r_n, \quad r_n = o(1), \quad n \rightarrow \pm \infty.
\end{equation}

It is known (see \cite{GaDz,HaAz}) that in the case of $\Omega \in L^2_{\mathbb{R}}[0, \pi]$ the norming constants have an asymptotic form:

\begin{equation}\label{eq2.2}
a_n (\Omega) = \pi + c_n, \quad \sum_{n=-\infty}^{\infty} c_n^2 < \infty.
\end{equation}

\begin{definition}
Two Dirac operators $L(\Omega, \alpha, 0)$ and $L(\tilde{\Omega}, \tilde{\alpha}, 0)$ are said to be isospectral,
if $\lambda_n(\Omega, \alpha, 0) = \lambda_n(\tilde{\Omega}, \tilde{\alpha}, 0)$, for every $n \in \mathbb{Z}$.
\end{definition}

\begin{lemma}\label{lem2.1}
Let $\Omega, \tilde{\Omega} \in L^1_{\mathbb{R}}[0, \pi] $ and
the operators $L(\Omega, \alpha, 0)$ and $L(\tilde{\Omega}, \tilde{\alpha}, 0)$ are isospectral.
Then $\tilde{\alpha} = \alpha$.
\end{lemma}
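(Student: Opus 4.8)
The plan is to read off $\tilde\alpha = \alpha$ directly from the eigenvalue asymptotics, since those asymptotics already encode the boundary parameter in their leading non-integer term. First I would apply the asymptotic formula \eqref{eq2.1} to each of the two operators separately: for $L(\Omega,\alpha,0)$ it gives $\lambda_n(\Omega,\alpha,0) = n - \tfrac{\alpha}{\pi} + r_n$ with $r_n = o(1)$, and for $L(\tilde\Omega,\tilde\alpha,0)$ it gives $\lambda_n(\tilde\Omega,\tilde\alpha,0) = n - \tfrac{\tilde\alpha}{\pi} + \tilde r_n$ with $\tilde r_n = o(1)$, both as $n \to \pm\infty$. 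Here I use that \eqref{eq2.1} holds for arbitrary summable $\Omega$, which is exactly the stated hypothesis $\Omega,\tilde\Omega \in L^1_{\mathbb{R}}[0,\pi]$.

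Next I would invoke the isospectrality hypothesis, which by definition means $\lambda_n(\Omega,\alpha,0) = \lambda_n(\tilde\Omega,\tilde\alpha,0)$ for every $n \in \mathbb{Z}$; in particular the two enumerations are already matched index by index, so I may subtract the two asymptotic expansions term by term. The integer parts $n$ cancel and I am left with
\[
0 = \frac{\tilde\alpha - \alpha}{\pi} + \left( r_n - \tilde r_n \right), \qquad n \to \pm\infty.
\]
Letting $n \to +\infty$ and using $r_n, \tilde r_n = o(1)$ forces $r_n - \tilde r_n \to 0$, whence $\tfrac{\tilde\alpha - \alpha}{\pi} = 0$, i.e. $\tilde\alpha = \alpha$.

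I do not expect a genuine obstacle here; the substance of the lemma is really the asymptotic formula \eqref{eq2.1}, which is quoted as known. The only point deserving a word of care is the consistency of the enumeration: because the eigenvalues are simple and labelled by a fixed rule (positive index for positive eigenvalues, the nearest-to-zero one called $\lambda_0$), equality of the two spectra as indexed sequences is precisely what ``isospectral'' supplies, so no re-indexing step is needed. It is also worth emphasizing that the conclusion uses only the leading coefficient $-\alpha/\pi$ of the asymptotics and is therefore insensitive to the finer structure of $\Omega$; this explains why $\tilde\alpha = \alpha$ follows even though $\Omega$ and $\tilde\Omega$ themselves need not coincide.
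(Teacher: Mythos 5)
Your proposal is correct and is essentially the paper's own proof: the paper likewise extracts $\alpha/\pi = \lim_{n\to\infty}(n - \lambda_n(\Omega,\alpha,0)) = \lim_{n\to\infty}(n - \lambda_n(\tilde\Omega,\tilde\alpha,0)) = \tilde\alpha/\pi$ directly from the asymptotics \eqref{eq2.1} applied to both operators. Your additional remark about the enumeration being fixed by the labelling convention is a fair point of care, but it does not change the substance of the argument.
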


\begin{proof}
The proof follows from the asymptotics \eqref{eq2.1}:

\[
\cfrac{\alpha}{\pi} = \lim_{n \rightarrow \infty} (n - \lambda_n(\Omega, \alpha, 0)) =
 \lim_{n \rightarrow \infty} (n - \lambda_n(\tilde{\Omega}, \tilde{\alpha}, 0)) = \cfrac{\tilde{\alpha}}{\pi}.
\]
\end{proof}

So, instead of isospectral operators $L(\Omega, \alpha, 0)$ and $L(\tilde{\Omega}, \tilde{\alpha}, 0)$, we can talk about "isospectral potentials"
$\Omega$ and $\tilde{\Omega}$.

Let us fix some $\Omega \in L^2_{\mathbb{R}}[0, \pi] $ and consider the set of all canonical potentials
$\tilde{\Omega} = \left(
                    \begin{array}{cc}
                      \tilde{p} & \tilde{q} \\
                      \tilde{q} & -\tilde{p} \\
                    \end{array}
                  \right)
$,
with the same spectra as $\Omega$:

\[
M^2(\Omega) = \{ \tilde{\Omega} \in L^2_{\mathbb{R}}[0, \pi]:
\lambda_n(\tilde{\Omega}, \tilde{\alpha}, 0) = \lambda_n(\Omega, \alpha, 0), n \in \mathbb{Z} \}.
\]

Our main goal is to give the description of the set $M^2(\Omega)$ in terms of eigenvalues' gradients.
Note, that the problem of description of isospectral Sturm-Liouville operators was solved in \cite{IsTr,IsMcTr,DaTr,PoTr}.

For Dirac operators the description of $M^2(\Omega)$ is given in \cite{Ha2}.
This description has a "recurrent" form, i.e. at the first in \cite{Ha2} is given the description of a family of isospectral potentials $\Omega (x, t), t \in \mathbb{R}$, for which only one norming constant $a_m (\Omega (\cdot, t))$ different from $a_m (\Omega)$ (namely, $a_m (\Omega (\cdot, t)) = a_m (\Omega) e^{-t}$), while the others are equal, i.e. $a_m (\Omega (\cdot, t)) = a_m (\Omega)$, when $n \neq m$.

\begin{theorem}\label{thm2.1}{\cite{Ha2}.}
Let $t \in \mathbb{R}$, $\alpha \in \Big( - \frac{\pi}{2}, \frac{\pi}{2} \Big]$. Then
\footnote{Here * is a sign of transponation, e.g.
$h_m^{*} =
 \left(
    \begin{array}{c}
      h_{m_1} \\
      h_{m_2} \\
    \end{array}
  \right)^{*}
= ( h_{m_1}, h_{m_2} )
$}

\[
\Omega(x,t) = \Omega(x) + \cfrac{e^t -1}{\theta_m(x,t,\Omega)} \{ B h_m(x,\Omega) h_m^{*}(x,\Omega) - h_m(x,\Omega) h_m^{*}(x,\Omega) B \},
\]
where $\theta_m (x, t, \Omega) = 1 + (e^t - 1) \int_0^x |h_m (s, \Omega)|^2 ds$.
So, for arbitrary $t \in \mathbb{R}$, $\lambda_n(\Omega,t) = \lambda_n (\Omega)$ for all $n \in \mathbb{Z}$, $a_n(\Omega,t) = a_n (\Omega)$ for all $n \in \mathbb{Z} \backslash \{ m\}$ and $a_m(\Omega,t) = a_m (\Omega) e^{-t}$.
\end{theorem}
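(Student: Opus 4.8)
The plan is to verify the formula for $\Omega(x,t)$ directly by checking the three spectral assertions, since the statement already hands us the explicit perturbation. The natural strategy is to exploit Theorem \ref{thm1.1}, which tells us how $\lambda_n$ moves under an infinitesimal change of the potential. First I would differentiate the family $\Omega(x,t)$ with respect to $t$ and read off the infinitesimal perturbation $\dot p(x,t) = \partial_t p(x,t)$ and $\dot q(x,t) = \partial_t q(x,t)$. Writing the matrix bracket explicitly, using $B = \begin{pmatrix} 0 & 1 \\ -1 & 0 \end{pmatrix}$ and $h_m h_m^{*} = \begin{pmatrix} h_{m_1}^2 & h_{m_1}h_{m_2} \\ h_{m_1}h_{m_2} & h_{m_2}^2 \end{pmatrix}$, one computes that $B h_m h_m^{*} - h_m h_m^{*} B$ is a symmetric traceless matrix whose $\sigma_2$-component is proportional to $h_{m_1}^2 - h_{m_2}^2$ and whose $\sigma_3$-component is proportional to $2 h_{m_1} h_{m_2}$; in other words it has exactly the canonical form $\sigma_2 \dot p + \sigma_3 \dot q$ with $\dot p = |h_{m_1}|^2 - |h_{m_2}|^2$ and $\dot q = 2 h_{m_1} h_{m_2}$, up to the scalar factor $(e^t-1)/\theta_m$ and its derivative.

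The key observation is that these are precisely the expressions appearing in the gradient formulae of Theorem \ref{thm1.1}. Hence, by the chain rule and Definition \ref{def1.1},
\begin{gather*}
\frac{d}{dt}\lambda_n(\Omega(\cdot,t)) = \int_0^\pi \Big[ \frac{\partial \lambda_n}{\partial p(x)} \dot p(x,t) + \frac{\partial \lambda_n}{\partial q(x)} \dot q(x,t) \Big] dx \\
= \int_0^\pi \big[ (|h_{n_1}|^2 - |h_{n_2}|^2)\dot p + 2 h_{n_1} h_{n_2}\, \dot q \big]\, dx,
\end{gather*}
where $h_n = h_n(\cdot, \Omega(\cdot,t))$ are the normalized eigenfunctions of the perturbed operator. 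I expect this integrand to be an exact derivative: substituting the explicit $\dot p, \dot q$ and using the Dirac equation \eqref{eq1.1} that $h_n$ itself satisfies, the expression should collapse to $\frac{d}{dx}$ of some bilinear combination of $h_n$ and $h_m$, which then integrates to boundary terms. Because both $h_n$ and $h_m$ meet the same boundary conditions \eqref{eq1.2}–\eqref{eq1.3} at $0$ and $\pi$ (with $\beta=0$), those boundary terms vanish, giving $\frac{d}{dt}\lambda_n \equiv 0$ and hence $\lambda_n(\Omega,t) = \lambda_n(\Omega)$ for all $t$; this is the isospectrality claim.

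For the norming constants I would track $a_n(\Omega,t)$ along the flow. The cleanest route is to show that the flow acts on the eigenfunctions by an explicit transformation: one expects $h_n(x,\Omega(\cdot,t))$ to be obtained from $h_n(x,\Omega)$ by a gauge-type factor built from $\theta_m$, and in particular the $m$-th eigenfunction should scale so that $\|\varphi_m\|^2$ picks up exactly the factor $e^{-t}$. Concretely I would verify that $\theta_m(x,t,\Omega)$ satisfies $\partial_t \theta_m = (e^t - e^{-t})\cdots$ no—rather I would use $\theta_m(\pi,t,\Omega) = 1 + (e^t-1)\int_0^\pi |h_m|^2 ds = e^t$ (since $\int_0^\pi |h_m|^2 = 1$), and relate $a_m(\Omega,t)$ to this value. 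Differentiating $a_m$ in $t$ and using $\partial_t a_m / a_m = -1$ (which should follow from the same boundary-term computation, now specialized to $n=m$ where a nonvanishing diagonal term survives) yields $a_m(\Omega,t) = a_m(\Omega)e^{-t}$, while for $n \neq m$ the corresponding term vanishes by orthogonality of $h_n$ and $h_m$, giving $a_n(\Omega,t) = a_n(\Omega)$.

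The main obstacle I anticipate is the norming-constant bookkeeping rather than the isospectrality. Checking $\lambda_n(\Omega,t)=\lambda_n(\Omega)$ reduces cleanly to a vanishing boundary term, but pinning down the exact factor $e^{-t}$ for $a_m$ and confirming invariance of $a_n$ for $n\neq m$ requires knowing precisely how the eigenfunctions transform under the flow, which in turn rests on the classical Darboux/Crum-type structure behind the formula for $\Omega(x,t)$. Since this theorem is attributed to \cite{Ha2}, the most honest route is to reduce each claim to the gradient formulae of Theorem \ref{thm1.1} and to the defining identity $\partial_t a_m = -a_m$, citing \cite{Ha2} for the detailed construction of the eigenfunction transformation where the computation becomes lengthy.
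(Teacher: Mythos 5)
First, a point of calibration: the paper itself does not prove Theorem \ref{thm2.1} at all --- it is quoted verbatim from \cite{Ha2}, and the paper's contribution (Theorem \ref{thm2.3}) is merely the observation, via \eqref{eq2.3}, that the bracket $\{Bh_mh_m^* - h_mh_m^*B\}$ equals $B\,\partial\lambda_m/\partial\Omega$. So your proposal cannot be measured against an in-paper argument; it must stand on its own, and as it stands it has two genuine problems. The first is a concrete computational error: with $B=\left(\begin{smallmatrix}0&1\\-1&0\end{smallmatrix}\right)$ one gets
\[
Bh_mh_m^* - h_mh_m^*B=\left(\begin{array}{cc}2h_{m_1}h_{m_2} & h_{m_2}^2-h_{m_1}^2\\ h_{m_2}^2-h_{m_1}^2 & -2h_{m_1}h_{m_2}\end{array}\right),
\]
so the perturbation has $\dot p\propto 2h_{m_1}h_{m_2}$ and $\dot q\propto h_{m_2}^2-h_{m_1}^2$ --- exactly the \emph{$B$-rotated} gradient of \eqref{eq2.3}, not the gradient itself as you claim ($\sigma_2$- and $\sigma_3$-components swapped, with a sign). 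This is not cosmetic: it is only with the corrected components that your first-variation integrand becomes $2\left(h_{n_1}h_{m_2}-h_{n_2}h_{m_1}\right)\langle h_n,h_m\rangle$, which collapses to an exact derivative via the Wronskian identity $\frac{d}{dx}\left(h_{n_1}h_{m_2}-h_{n_2}h_{m_1}\right)=(\lambda_m-\lambda_n)\langle h_n,h_m\rangle$, giving $\frac{1}{\lambda_m-\lambda_n}\frac{d}{dx}W_{nm}^2$ with vanishing boundary terms; with your stated $\dot p,\dot q$ the integrand does not collapse.

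The second, deeper problem is circularity along the flow. The gradient formulae of Theorem \ref{thm1.1} at time $t$ involve the eigenfunctions $h_n(\cdot,\Omega(\cdot,t))$ of the \emph{perturbed} operator, whereas $\partial_t\Omega(x,t)=\frac{e^t}{\theta_m^2(x,t)}\{Bh_mh_m^*-h_mh_m^*B\}$ is built from the \emph{fixed} original eigenfunction $h_m(\cdot,\Omega)$. The Wronskian collapse above requires $h_n$ and $h_m$ to solve the same Dirac equation, which is true only at $t=0$ --- unless one already knows the Darboux-type transformation $h_m(x,\Omega(\cdot,t))=e^{t/2}h_m(x,\Omega)/\theta_m(x,t)$, which is precisely the substantive content of \cite{Ha2}. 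Your treatment of the norming constants rests entirely on the same unproven transformation (your own text concedes this by "citing \cite{Ha2} for the detailed construction"). So the proposal establishes $\frac{d}{dt}\lambda_n\big|_{t=0}=0$ after the component fix, but everything beyond $t=0$, and all three spectral assertions about $a_n$, are imported rather than proved; as a self-contained argument it has a genuine gap, and as a citation-based argument it reduces to exactly what the paper itself does --- quote \cite{Ha2}.
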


Theorem \ref{thm2.1} shows that it is possible to change exactly one norming constant, keeping the others.

Changing successively each $a_m (\Omega)$ by $a_m (\Omega) e^{-t_m}$, we can obtain any isospectral potential, corresponding to the sequence
$\{ t_m; m \in \mathbb{Z} \} \in l^2 $.

In \cite{Ha2} were used the following designations:

\begin{flushleft}
\quad \quad $T_{-1} = \{ \ldots, 0, \ldots \}$,\\
\quad \quad $T_{0} = \{ \ldots, 0, \ldots, 0, t_0,  0, \ldots, 0, \ldots \}$, \\
\quad \quad $T_{1} = \{ \ldots, 0, \ldots, 0, 0, t_0, t_1, 0, \ldots, 0, \ldots \}$, \\
\quad \quad $T_{2} = \{ \ldots, 0, \ldots, 0, t_{-1}, t_0, t_1, 0, \ldots, 0, \ldots \}$, \\
\quad \quad \ldots,\\
\quad \quad $T_{2n} = \{ \ldots, 0, 0, t_{-n}, \ldots, t_{-1}, t_0, t_1, \ldots, t_{n-1}, t_{n}, 0, \ldots \}$, \\
\quad \quad $T_{2n+1} = \{ \ldots, 0, t_{-n}, t_{-n+1}, \ldots, t_{-1}, t_0, t_1, \ldots, t_{n}, t_{n+1}, 0, \ldots \}$, \\
\quad \quad \ldots.
\end{flushleft}
Let $\Omega(x, T_{-1}) \equiv \Omega(x)$ and

\[
\Omega(x, T_{m}) = \Omega(x, T_{m-1}) + \bigtriangleup \Omega(x, T_{m}), \quad m = 0, 1, 2, \ldots,
\]
where

\[
\bigtriangleup \Omega(x, T_{m}) = \cfrac{e^{t_{\tilde{m}}} - 1}{\theta_m(x, t_{\tilde{m}}, \Omega(\cdot, T_{m-1}))}
                                  [ B h_{\tilde{m}}(x, \Omega(\cdot, T_{m-1})) h_{\tilde{m}}^{*}(\cdot)
                                  - h_{\tilde{m}}(\cdot) h_{\tilde{m}}^{*}(\cdot) B ],
\]
where $\tilde{m} = \cfrac{m+1}{2}$, if $m$ is odd and $\tilde{m} = - \cfrac{m}{2}$, if $m$ is even.
The arguments in others $h_{\tilde{m}}(\cdot)$ and $h_{\tilde{m}}^{*}(\cdot)$ are the same as in the first.
And after that in \cite{Ha2} was proved:

\begin{theorem}\label{thm2.2}{\cite{Ha2}.}
Let $T = \{ t_n, n \in \mathbb{Z} \} \in l^2 $ and $\Omega \in L^2_{\mathbb{R}}[0, \pi]$. Then

\begin{equation*}
\Omega(x, T) \equiv \Omega(x) + \sum_{m=0}^{\infty} \bigtriangleup \Omega(x, T_{m}) \in M^2(\Omega).
\end{equation*}
\end{theorem}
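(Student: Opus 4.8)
The plan is to establish the statement in three stages: first show that every finite truncation $\Omega(\cdot,T_m)$ is isospectral with $\Omega$ and carries explicitly computable norming constants; then prove that $\{\Omega(\cdot,T_m)\}_{m\ge 0}$ is Cauchy in $L^2_{\mathbb R}[0,\pi]$; and finally transfer isospectrality to the limit. For the first stage I would induct on $m$, applying Theorem \ref{thm2.1} at each step to the current potential $\Omega(\cdot,T_{m-1})$ with the single index $\tilde m$ and parameter $t_{\tilde m}$. By that theorem the passage $\Omega(\cdot,T_{m-1})\mapsto\Omega(\cdot,T_m)$ leaves all eigenvalues fixed and multiplies exactly the norming constant $a_{\tilde m}$ by $e^{-t_{\tilde m}}$, keeping the remaining ones unchanged. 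Since the indices $\tilde m$ run through $0,1,-1,2,-2,\dots$ without repetition, after step $m$ the truncation $\Omega(\cdot,T_m)$ has eigenvalues $\lambda_n(\Omega)$ and norming constants $a_n(\Omega)e^{-t_n}$ at the already-processed indices and $a_n(\Omega)$ elsewhere. In particular each $\Omega(\cdot,T_m)\in M^2(\Omega)$, and the prospective limiting data $\{\lambda_n(\Omega),\,a_n(\Omega)e^{-t_n}\}$ again obeys \eqref{eq2.2}, because $a_n(\Omega)e^{-t_n}-\pi=c_n-\pi t_n+o(1)$ with $\{c_n\},\{t_n\}\in l^2$.

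The analytic heart is the Cauchy estimate. From the explicit increment $\bigtriangleup\Omega(\cdot,T_m)=\frac{e^{t_{\tilde m}}-1}{\theta_m}\{Bh_{\tilde m}h_{\tilde m}^{*}-h_{\tilde m}h_{\tilde m}^{*}B\}$ one checks directly that $Bh_m h_m^{*}-h_m h_m^{*}B=\sigma_2\,(2h_{m_1}h_{m_2})+\sigma_3\,(h_{m_2}^2-h_{m_1}^2)$, so each increment is again a real, canonical $L^2$ matrix, and its size is governed by $|e^{t_{\tilde m}}-1|\le C|t_{\tilde m}|$ together with $\theta_m\ge c>0$ (valid since $0\le\int_0^x|h_{\tilde m}|^2\le 1$ and $t_{\tilde m}\to 0$). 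The obstacle is that a crude term-by-term bound only gives $\sum_m\|\bigtriangleup\Omega(\cdot,T_m)\|_{L^2}\lesssim\sum_n|t_n|$, i.e. it would demand $\{t_n\}\in l^1$; to use only $\{t_n\}\in l^2$ I must exploit cancellation. To leading order in $t_{\tilde m}$ the increment equals $t_{\tilde m}\,G_{\tilde m}$, where $G_{\tilde m}$ is assembled from the products of the components of $h_{\tilde m}$; the relevant point is that these tangent directions form the Parseval/Riesz-type system underlying the Pöschel–Trubowitz coordinates on isospectral sets (cf. \cite{PoTr}), for which an $l^2$ coefficient sequence yields an $L^2$-convergent series. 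This produces a Bessel-type bound $\big\|\sum_{m=M+1}^{N}\bigtriangleup\Omega(\cdot,T_m)\big\|_{L^2}^2\lesssim\sum_{M<\cdot\le N}|t_n|^2\to 0$, hence the Cauchy property. I expect the two genuine technical difficulties to be (i) uniform control of the normalized eigenfunctions $h_{\tilde m}(\cdot,\Omega(\cdot,T_{m-1}))$ in $L^\infty$ along the whole iteration, which requires a uniform-in-$m$ bound on $\Omega(\cdot,T_{m-1})$, and (ii) absorbing the higher-order-in-$t$ corrections into the $l^2$ estimate; both are handled once the near-orthogonality of the $G_{\tilde m}$ is in place.

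Finally, writing the $L^2$-limit as $\Omega(\cdot,T)=\Omega+\sum_{m\ge 0}\bigtriangleup\Omega(\cdot,T_m)$, I would conclude $\Omega(\cdot,T)\in M^2(\Omega)$ from the continuous dependence of the eigenvalues on the potential: since $\lambda_n(\Omega(\cdot,T_m))=\lambda_n(\Omega)$ for every $m$ and $\Omega(\cdot,T_m)\to\Omega(\cdot,T)$ in $L^2\subset L^1$, continuity of each $\lambda_n$ in the $L^1$-topology gives $\lambda_n(\Omega(\cdot,T))=\lambda_n(\Omega)$ for all $n\in\mathbb Z$. The limit is real-valued and of canonical form because every partial sum is, and this class is closed under $L^2$-limits; hence $\Omega(\cdot,T)\in M^2(\Omega)$, as claimed.
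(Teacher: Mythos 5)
You should note at the outset that the paper itself contains no proof of Theorem \ref{thm2.2}: it is quoted from \cite{Ha2}, so there is no in-paper argument to match. Against the construction the paper does set up, your skeleton is right and consistent: iterating Theorem \ref{thm2.1} through the indices $\tilde m = 0, 1, -1, 2, -2, \ldots$ (each visited once) does give, by induction, that every truncation $\Omega(\cdot, T_m)$ lies in $M^2(\Omega)$ with norming constants $a_n(\Omega)e^{-t_n}$ at the processed indices and $a_n(\Omega)$ elsewhere; your identity $Bh_mh_m^{*} - h_mh_m^{*}B = \sigma_2\,(2h_{m_1}h_{m_2}) + \sigma_3\,(h_{m_2}^2 - h_{m_1}^2)$ agrees with \eqref{eq2.3}, so each increment is indeed real and canonical; and the final passage to the limit via continuity of each (simple) eigenvalue under $L^1$-convergence of the potential on a finite interval is sound, since the spectra of all truncations coincide as sets with that of $\Omega$.

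The genuine gap is your Stage 2, which is the analytic heart of the theorem and is asserted rather than proved. The ``Bessel-type'' bound $\|\sum_{M<m\le N}\bigtriangleup\Omega(\cdot,T_m)\|_{L^2}^2 \lesssim \sum |t_n|^2$ rests on near-orthogonality of the tangent directions $G_{\tilde m}$, but the Riesz-system results of P\"oschel--Trubowitz type (\cite{PoTr}, \cite{IsTr}) concern bilinear combinations of eigenfunctions of a \emph{single fixed} operator, whereas your $G_{\tilde m}$ is built from eigenfunctions of a \emph{different} operator $L(\Omega(\cdot,T_{m-1}),\alpha,0)$ at every step; no citation covers this, and proving uniform Riesz bounds along the iteration is precisely the hard content. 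Worse, the uniform estimates you invoke --- $\|h_{\tilde m}(\cdot,\Omega(\cdot,T_{m-1}))\|_{\infty}\le C$ uniformly in $m$ (by Gronwall this needs $\sup_m\|\Omega(\cdot,T_{m-1})\|_{L^1}<\infty$, together with $a_{\tilde m}(\Omega(\cdot,T_{m-1}))$ bounded away from $0$) and $\theta_m \ge c > 0$ --- presuppose exactly the uniform boundedness of the partial sums that the Cauchy estimate is meant to deliver, so items (i) and (ii), which you defer, make the argument circular as written. To close it you would need either a bootstrap (choose $N$ with $\sum_{|n|>N}t_n^2$ small and run an induction keeping all iterates in a fixed $L^2$-ball, absorbing the quadratic corrections), or a different identification of the limit: the target data $\{\lambda_n(\Omega),\, a_n(\Omega)e^{-t_n}\}$ satisfies \eqref{eq2.1}--\eqref{eq2.2} --- here your remainder should be exhibited as an $l^2$ sequence, $a_ne^{-t_n}-\pi = c_n - \pi t_n + O(t_n^2 + |c_nt_n|)$, not merely $o(1)$ --- so the inverse spectral theory (\cite{GaDz}, \cite{AlHrMy}) produces a unique $\tilde\Omega\in L^2_{\mathbb{R}}[0,\pi]$ with this data, and one then shows the partial sums converge to it. As it stands, the proposal is a credible plan whose decisive step is missing.
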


We see, that each potential matrix $\bigtriangleup \Omega(x, T_{m})$ defined by normalized eigenfunctions $h_{\tilde{m}}(x, \Omega(x, T_{m-1}))$ of the previous operator $L(\Omega(\cdot, T_{m-1}), \alpha, 0)$.
This approach we call "recurrent" description.

If we denote
\[
\cfrac{\partial \lambda_n}{\partial \Omega(x)} :=
            \left(
             \begin{array}{cc}
               \cfrac{\partial \lambda_n}{\partial p(x)} & \cfrac{\partial \lambda_n}{\partial q(x)} \\
               \cfrac{\partial \lambda_n}{\partial q(x)} & -\cfrac{\partial \lambda_n}{\partial p(x)} \\
             \end{array}
           \right)
           =
           \left(
             \begin{array}{cc}
               h_{n_1}^2(x) - h_{n_2}^2(x)  & 2 h_{n_1}(x)  h_{n_2}(x)  \\
                & \\
               2 h_{n_1}(x)  h_{n_2}(x)  & -(h_{n_1}^2(x)  - h_{n_2}^2(x) ) \\
             \end{array}
           \right),
\]
we will have
\begin{equation}\label{eq2.3}
  B \cfrac{\partial \lambda_n}{\partial \Omega(x) }
           =
           \left(
             \begin{array}{cc}
             2 h_{n_1}(x)  h_{n_2}(x)  & h_{n_2}^2(x)  - h_{n_1}^2(x)  \\
             h_{n_2}^2(x)  - h_{n_1}^2(x)  & -2 h_{n_1}(x)  h_{n_2}(x)  \\
             \end{array}
           \right).
\end{equation}
And it is easy to see, that the term
$[ B h_{\tilde{m}}(x, \Omega(\cdot, T_{m-1})) h_{\tilde{m}}^{*}(\cdot)- h_{\tilde{m}}(\cdot) h_{\tilde{m}}^{*}(\cdot) B ]$
of $\bigtriangleup \Omega(x, T_{m})$ is equal to $B \cfrac{\partial \lambda_n}{\partial \Omega(x, T_{m})}$.
Therefore the Theorems \ref{thm2.1} and \ref{thm2.2} can be rewritten as

\begin{theorem}\label{thm2.3}
Let $t \in \mathbb{R}$, $\alpha \in \Big( - \frac{\pi}{2}, \frac{\pi}{2} \Big]$. Then

\[
\Omega(x,t) = \Omega(x) + \cfrac{(e^t - 1)}{\theta_m(x,t,\Omega)} B \cfrac{\partial \lambda_m}{\partial \Omega(\cdot, T_{m}))},
\]
where $\theta_m (x, t, \Omega) = 1 + (e^t - 1) \int_0^x |h_m (s, \Omega)|^2 ds$.
So, for arbitrary $t \in \mathbb{R}$, $\lambda_n(\Omega,t) = \lambda_n (\Omega)$ for all $n \in \mathbb{Z}$, $a_n(\Omega,t) = a_n (\Omega)$ for all $n \in \mathbb{Z} \backslash \{ m\}$ and $a_m(\Omega,t) = a_m (\Omega) e^{-t}$.
\end{theorem}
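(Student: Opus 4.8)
The plan is to recognize Theorem~\ref{thm2.3} as nothing more than a restatement of Theorem~\ref{thm2.1}, in which the bracketed expression $[\,B h_m h_m^{*} - h_m h_m^{*} B\,]$ appearing in the formula for $\Omega(x,t)$ is replaced by the gradient object $B\,\partial\lambda_m/\partial\Omega(x)$ built from \eqref{eq2.3}. Consequently the entire content of the proof reduces to verifying the single matrix identity
\[
B h_m(x) h_m^{*}(x) - h_m(x) h_m^{*}(x) B = B\,\cfrac{\partial \lambda_m}{\partial \Omega(x)},
\]
after which the spectral assertions are inherited verbatim from Theorem~\ref{thm2.1}.

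First I would assemble the right-hand side from the formulae of Theorem~\ref{thm1.1}. Those give the gradient components $\partial\lambda_m/\partial p(x) = h_{m_1}^2(x) - h_{m_2}^2(x)$ and $\partial\lambda_m/\partial q(x) = 2 h_{m_1}(x) h_{m_2}(x)$, so that $\partial\lambda_m/\partial\Omega(x)$ is exactly the symmetric traceless matrix displayed just above \eqref{eq2.3}; left-multiplication by $B$ then produces precisely the matrix written out on the right of \eqref{eq2.3}.

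Next I would compute the left-hand side directly. Using that $h_m$ is real-valued (so $h_m^{*}$ is its transpose, as in the footnote to Theorem~\ref{thm2.1}), one has
\[
h_m h_m^{*} = \begin{pmatrix} h_{m_1}^2 & h_{m_1} h_{m_2} \\ h_{m_1} h_{m_2} & h_{m_2}^2 \end{pmatrix},
\]
and carrying out the two products with $B = \left(\begin{smallmatrix} 0 & 1 \\ -1 & 0 \end{smallmatrix}\right)$ yields
\[
B h_m h_m^{*} - h_m h_m^{*} B = \begin{pmatrix} 2 h_{m_1} h_{m_2} & h_{m_2}^2 - h_{m_1}^2 \\ h_{m_2}^2 - h_{m_1}^2 & -2 h_{m_1} h_{m_2} \end{pmatrix}.
\]
This coincides with the matrix in \eqref{eq2.3}, establishing the identity and showing that the two expressions for the perturbed potential are literally the same function.

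Because $\Omega(x,t)$ in Theorem~\ref{thm2.3} is thus identical to the potential of Theorem~\ref{thm2.1}, the conclusions $\lambda_n(\Omega,t) = \lambda_n(\Omega)$ for all $n \in \mathbb{Z}$, $a_n(\Omega,t) = a_n(\Omega)$ for $n \neq m$, and $a_m(\Omega,t) = a_m(\Omega)e^{-t}$ follow immediately from that theorem with no further argument. There is no genuine analytic obstacle here: the only point demanding care is the bookkeeping of signs in the two matrix products, together with the observation that the reality of the eigenfunctions makes $h_m^{*}$ the plain transpose, so that the entries of $h_m h_m^{*}$ are the products $h_{m_i} h_{m_j}$ rather than $h_{m_i}\bar{h}_{m_j}$.
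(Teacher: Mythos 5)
Your proposal is correct and coincides with the paper's own justification: the paper likewise treats Theorem \ref{thm2.3} as a rewriting of Theorem \ref{thm2.1}, reducing everything to the identity $B h_m h_m^{*} - h_m h_m^{*} B = B\,\partial\lambda_m/\partial\Omega$ established via \eqref{eq2.3} (the paper merely says ``it is easy to see'' where you carry out the two matrix products explicitly), with the spectral conclusions inherited from Theorem \ref{thm2.1}. Your explicit computation, including the remark that reality of the eigenfunctions makes $h_m^{*}$ the plain transpose, checks out and fills in the only step the paper leaves implicit.
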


\begin{theorem}\label{thm2.4}
Let $T = \{ t_n, n \in \mathbb{Z} \} \in l^2 $ and $\Omega \in L^2_{\mathbb{R}}[0, \pi]$. Then

\[
\Omega(x, T) \equiv \Omega(x) +
\sum_{m=0}^{\infty} \cfrac{e^{t_{\tilde{m}}} - 1}{\theta_m(x, t_{\tilde{m}}, \Omega(x, T_{m-1}))} B \cfrac{\partial \lambda_{\tilde{m}}}{\partial \Omega(x, T_{m-1}))}.
\]
\end{theorem}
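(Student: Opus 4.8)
The plan is to recognize that Theorem \ref{thm2.4} is a reformulation of Theorem \ref{thm2.2}, in which each potential increment $\bigtriangleup \Omega(x, T_m)$ is rewritten through the eigenvalue gradient of Theorem \ref{thm1.1}. Since Theorem \ref{thm2.2} already guarantees that the series $\Omega(x) + \sum_{m=0}^{\infty} \bigtriangleup \Omega(x, T_m)$ converges and belongs to $M^2(\Omega)$, it suffices to verify the pointwise matrix identity
\[
B h_{\tilde{m}}(x) h_{\tilde{m}}^{*}(x) - h_{\tilde{m}}(x) h_{\tilde{m}}^{*}(x) B = B \frac{\partial \lambda_{\tilde{m}}}{\partial \Omega(x, T_{m-1})},
\]
where every eigenfunction and gradient is understood with respect to the operator $L(\Omega(\cdot, T_{m-1}), \alpha, 0)$.

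First I would recall the component formulas from Theorem \ref{thm1.1}, namely $\partial \lambda_n / \partial p(x) = h_{n_1}^2 - h_{n_2}^2$ and $\partial \lambda_n / \partial q(x) = 2 h_{n_1} h_{n_2}$, and assemble them into the matrix $\partial \lambda_n / \partial \Omega(x)$ displayed just before \eqref{eq2.3}; left-multiplication by $B$ then produces the right-hand side of the identity, which is precisely \eqref{eq2.3}. Next I would compute the left-hand side directly: because the eigenfunction components are real (as used throughout Theorem \ref{thm1.1}), $h_{\tilde{m}} h_{\tilde{m}}^{*}$ is the symmetric rank-one matrix with entries $h_{\tilde{m}_i} h_{\tilde{m}_j}$, and multiplying it by $B$ on the left and on the right (recalling the off-diagonal $\pm 1$ structure of $B$ from the introduction) and subtracting yields the matrix with diagonal entries $\pm 2 h_{\tilde{m}_1} h_{\tilde{m}_2}$ and off-diagonal entries $h_{\tilde{m}_2}^2 - h_{\tilde{m}_1}^2$, which matches \eqref{eq2.3} exactly.

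With this identity established, I would substitute it into the expression for $\bigtriangleup \Omega(x, T_m)$ appearing in the construction that precedes Theorem \ref{thm2.2}, so that each summand becomes
\[
\frac{e^{t_{\tilde{m}}} - 1}{\theta_m(x, t_{\tilde{m}}, \Omega(\cdot, T_{m-1}))} \, B \frac{\partial \lambda_{\tilde{m}}}{\partial \Omega(x, T_{m-1})},
\]
which is exactly the summand in the statement of Theorem \ref{thm2.4}. Summing over $m \geq 0$ and invoking Theorem \ref{thm2.2} for convergence in $L^2_{\mathbb{R}}[0,\pi]$ and for the isospectrality $\Omega(x,T) \in M^2(\Omega)$ then completes the argument.

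I do not expect a genuine obstacle here: the entire content reduces to the routine $2 \times 2$ matrix identity above, once the reality of the components is invoked. The single point requiring care is the bookkeeping of arguments — each $h_{\tilde{m}}$ and its gradient must be evaluated at the previously constructed potential $\Omega(\cdot, T_{m-1})$ rather than at $\Omega$ itself, and the index correspondence $\tilde{m} = (m+1)/2$ for odd $m$ and $\tilde{m} = -m/2$ for even $m$ must be tracked. Both conventions are already fixed in the recurrent construction underlying Theorem \ref{thm2.2}, so the identity applies verbatim at every stage of the recursion.
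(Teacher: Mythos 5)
Your proposal is correct and follows exactly the paper's own route: the paper derives Theorem \ref{thm2.4} by verifying the $2\times 2$ identity \eqref{eq2.3}, observing that the commutator term $B h_{\tilde{m}} h_{\tilde{m}}^{*} - h_{\tilde{m}} h_{\tilde{m}}^{*} B$ in $\bigtriangleup\Omega(x,T_m)$ equals $B\,\partial\lambda_{\tilde{m}}/\partial\Omega$, and then invoking Theorem \ref{thm2.2} (cited from \cite{Ha2}) for convergence and isospectrality. Your bookkeeping remark that the gradient must be taken at $\Omega(\cdot,T_{m-1})$ is in fact more careful than the paper's own wording, which sloppily writes $\partial\lambda_n/\partial\Omega(x,T_m)$ at that point.
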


\section{\large{Changing spectral data on half axis.}}\label{sec3}

Let us consider canonical Dirac operator $L(p, q, \alpha)$ on $[0,\infty)$, which has a pure discrete spectra.
In work \cite{Ha3}, Harutyunyan proved, that in this case one can add or subtract a finite number of eigenvalues, or scale the values of norming constants (i.e. $a_n$ to change by $e^t a_n $, for arbitrary $t \in \mathbb{R}$).
In that work explicit formulae for potential functions of changed operator are given.

According to the paper \cite{Ha3}, when we want to add a new eigenvalue $\mu$, the formula for potential function $\Omega_1(x)$ will be:

\begin{equation}\label{eq3.1}
\begin{array}{c}
\Omega_1(x) \equiv \Omega(x) + \cfrac{1}{1 + \int_{0}^{x} |h(t, \mu)|^2 dt} 
\{ B h(x,\mu) h^{*}(x,\mu) - \qquad \qquad \\ 
\\
\qquad \qquad \qquad \qquad \qquad \qquad - h(x,\mu) h^{*}(x,\mu) B \}.
\end{array}
\end{equation}

When we want to subtract an eigenvalue, e.g. $\lambda_0$, the formula for potential function $\Omega_2(x)$ will be:

\begin{equation}\label{eq3.2}
\begin{array}{c}
\Omega_2(x) \equiv \Omega(x) - \cfrac{1}{1 - \int_{0}^{x} |h(t, \lambda_0)|^2 dt} 
\{ B h(x,\lambda_0) h^{*}(x,\lambda_0) - \qquad \qquad\\ 
\\
\qquad \qquad \qquad \qquad \qquad \qquad - h(x,\lambda_0) h^{*}(x,\lambda_0) B \}.
\end{array}
\end{equation}

When we want to scale the value of a norming constant, e.g. $a_0$, which corresponds to eigenvalue $\lambda_0$, the formula for potential function $\Omega_3(x)$ will be:

\begin{equation}\label{eq3.3}
\begin{array}{c}
\Omega_3(x) \equiv \Omega(x) + \cfrac{e^{-t} - 1}{1 + (e^{-t} - 1) \int_{0}^{x} |h(t, \lambda_0)|^2 dt} 
\{ B h(x,\lambda_0) h^{*}(x,\lambda_0) -  \\ 
\\
\qquad \qquad \qquad \qquad \qquad  - h(x,\lambda_0) h^{*}(x,\lambda_0) B \}.
\end{array}
\end{equation}
 
Using formula \eqref{eq2.3} we can rewrite the formulae \eqref{eq3.1}--\eqref{eq3.3} in terms of eigenvalues' gradient:

\begin{equation}\label{eq3.4}
\Omega_1(x) \equiv \Omega(x) + \cfrac{1}{1 + \int_{0}^{x} |h(t, \mu)|^2 dt} 
\cdot
\cfrac{\partial \mu}{\partial \Omega(x)},
\end{equation}
\begin{equation}\label{eq3.5}
\Omega_2(x) \equiv \Omega(x) - \cfrac{1}{1 - \int_{0}^{x} |h(t, \lambda_0)|^2 dt}
\cdot  
\cfrac{\partial \lambda_0}{\partial \Omega(x)},
\end{equation}
\begin{equation}\label{eq3.6}
\Omega_3(x) \equiv \Omega(x) + \cfrac{e^{-t} - 1}{1 + (e^{-t} - 1) \int_{0}^{x} |h(t, \lambda_0)|^2 dt} 
\cdot
\cfrac{\partial \lambda_0}{\partial \Omega(x)}.
\end{equation}

In \cite{Ha3} there is also given a formula for changing finite number of eigenvalues or norming constants.
If we want to add $n$ number of eigenvalues $\mu_k$, to subtract $m$ number of eigenvalues $\lambda_k$ and to scale $l$ number of norming constants $a_k$, then the formula for such potential $\tilde{\Omega}(x)$ depending of initial potential $\Omega(x)$ will be:
\begin{equation}\label{eq3.7}
\begin{array}{c}
\tilde{\Omega}(x) \equiv \Omega(x) + \displaystyle \sum_{k=1}^{n+m+l}
\cfrac{\gamma_k}{1 + \gamma_k \int_{0}^{x} |h(t,\Omega_{k-1}, \nu_k)|^2 dt} \cdot \\
\cdot \{ B h(x, \Omega_{k-1}, \nu_k) h^{*}(x, \Omega_{k-1}, \nu_k) - h(x, \Omega_{k-1}, \nu_k) h^{*}(x, \Omega_{k-1}, \nu_k) B \}.
\end{array}
\end{equation}
where
\begin{equation*}
\gamma_k = 
\Bigg \{ \begin{array}{c}
1, \qquad \qquad \qquad 1 \leq k \leq n, \\

\qquad -1, \qquad \qquad n+1 \leq k \leq n+m, \\

\qquad e^{-t} -1,\qquad  n+m+1 \leq k \leq n+m+l,
\end{array}
\end{equation*}
\begin{equation*}
\nu_k = 
\Bigg \{ \begin{array}{c}
\mu_k, \qquad \qquad \qquad 1 \leq k \leq n, \\
\qquad \qquad \lambda_k, \qquad \qquad n+1 \leq k \leq n+m+l,
\end{array}
\end{equation*}
and potential function $\Omega_0(x) = \Omega(x)$ and  $\Omega_k(x)$, for $k = 0, 1, \ldots, n+m+l$, are given by formula:
\begin{equation*}
\begin{array}{c}
\Omega_k(x) = \Omega_{k-1}(x) +
\cfrac{\gamma_k}{1 + \gamma_k \int_{0}^{x} |h(t,\Omega_{k-1}, \nu_k)|^2 dt} \cdot \\
\cdot \{ B h(x, \Omega_{k-1}, \nu_k) h^{*}(x, \Omega_{k-1}, \nu_k) - h(x, \Omega_{k-1}, \nu_k) h^{*}(x, \Omega_{k-1}, \nu_k) B \}.
\end{array} 
\end{equation*}

Using formula \eqref{eq2.3} we can rewrite the \eqref{eq3.7} in terms of eigenvalues' gradient:
\begin{equation}\label{eq3.8}
\tilde{\Omega}(x) \equiv \Omega(x) + \displaystyle \sum_{k=1}^{n+m+l}
\cfrac{\gamma_k}{1 + \gamma_k \int_{0}^{x} |h(t,\Omega_{k-1}, \nu_k)|^2 dt} \cdot
\cfrac{\partial \nu_k}{\partial \Omega_{k-1}(x)}
\end{equation}

\textbf{Acknowledgment.} This work was supported by the RA MES State Committee of Science, in the frames of the research project No.15T-1A392.

\end{document}